\documentclass[12pt,reqno]{amsart}
\usepackage{amsaddr}
\usepackage{amssymb,latexsym,amsmath,epsfig,amsthm,mathrsfs}
\usepackage{rotating}
\usepackage{graphicx}
\usepackage{amssymb}
\usepackage{lineno}
\usepackage{enumitem}
\usepackage{cite}
\usepackage[top=3cm, bottom=3cm, left=3cm, right=3cm]{geometry}
\usepackage[usenames]{color}
\usepackage[colorlinks=true,
linkcolor=blue,
filecolor=blue,
citecolor=blue]{hyperref}

\usepackage{mathtools}

\makeatletter
\DeclarePairedDelimiterX{\pmodx}[1]{(}{)}{{\operator@font mod}\mkern6mu#1}
\renewcommand{\pmod}{%
	\allowbreak
	\if@display\mkern18mu\else\mkern8mu\fi
	\pmodx
}
\makeatother

\newtheorem{theorem}{Theorem}[section]

\theoremstyle{definition}

\theoremstyle{remark}

\theoremstyle{theorem}

\theoremstyle{definition}

\numberwithin{equation}{section}

\begin{document}
	
\title[$S_h$-sets in abelian groups]{$S_h$-sets in abelian groups}
	
%%%%%%    Information for first author
	
\author[V Goswami]{Vivekanand Goswami}
\address{\em{\small Department of Mathematics, Indian Institute of Technology Bhilai, Durg – 491001, Chhattisgarh, India\\
email: vivekanandg@iitbhilai.ac.in}}
%\email{vivekanandg@iitbhilai.ac.in}
	
%%%%%%   Information for second author
	
\author[R K Mistri]{Raj Kumar Mistri$^{*}$}
\address{\em{\small Department of Mathematics, Indian Institute of Technology Bhilai, Durg – 491001, Chhattisgarh, India\\
email: rkmistri@iitbhilai.ac.in}}
	
\thanks{$^{*}$Corresponding author}

%    General info
\subjclass[2020]{Primary 11B13; Secondary 20K01, 05B10}

\keywords{$S_h$-sets; restricted $h$-fold sumsets; difference sets; Abelian groups.}

\begin{abstract}
For a positive integer $h$, a subset $A = \{a_1, \dots, a_k\}$ of an additive abelian group $G$ is called an $S_h$-set of size $k$ if all sums of $h$ distinct elements in $S$ are distinct. For fixed positive integers $h$ and $k$, let $v_h(k)$ denote the order of the smallest abelian group containing an $S_h$-set of size $k$. A lower bound for $v_2(k)$ is known. In this paper, we establish a lower bound for $v_3(k)$. Using our argument for $h=2$, we recover the known bound for $v_2(k)$.
\end{abstract}

%%% ----------------------------------------------------------------------
\maketitle
%%% ----------------------------------------------------------------------
	
%\tableofcontents

\section{Introduction}
Let $G$ be an additive abelian group. Let $h$ be a positive integer, and let $A = \{a_1, \dots, a_k\}$ be a subset of $G$. The cardinality of a set $A$ is denoted by $|A|$. The \emph{restricted $h$-fold sumset} of $A$, denoted by $h^\wedge A$, is defined as
\[h^\wedge A = \{a_1 + a_2 + \cdots + a_h : a_i \in A, a_i \neq a_j \ \text{for} \ i \neq j\}.\]
A subset $A$ of $G$ is called an $S_h$-set of size $k$ if all sums of $h$ distinct elements in $S$ are distinct \big(that is, $|h^\wedge A| = \binom{k}{h}$\big). Let $s_h(G)$ denote the largest cardinality of an $S_h$-set in $G$, and let $v_h(k)$ denote the order of the smallest abelian group $G$ for which $s_h(G)\geq k$.
	 
The study of $v_h(k)$ and $S_h$-sets has applications in coding theory (see \cite{graham1980a, graham1980b, guy1994}). In particular, $S_h$-sets are closely related to constant weight error-correcting codes. A constant weight code of length $k$ and weight $w$ is a set of binary vectors of length $k$, each having weight $w$, such that the Hamming distance between any two distinct vectors is at least $2d$. The maximum size of such a code is denoted by $A(k,2d,w)$. A connection between these parameters is given in \cite{graham1980a, brouwer1990}. In \cite[Theorem 16]{brouwer1990} it is shown that
\[
A(k,2d,w)\geq \frac{1}{v_{d -1}(k)}\binom{k}{w}.
\]	 
	 
In \cite{haanpaa2004, haanpaa2007}, bounds for $s_2(G)$ and $v_2(k)$ were established.  Recently, $S_h$-sets were studied in the setting of finite vector spaces over finite fields in \cite{pantoja2025}. We refer the reader to \cite{graham1980a, graham1980b, guy1994, bajnok2018} for open problems related to $S_h$-sets.

The following theorem gives a lower bound for $v_2(k)$. Although it is not stated explicitly in \cite{haanpaa2007}, it follows from the proof of the corresponding result therein.

\begin{theorem}[{\cite[Theorem 2]{haanpaa2007}}]\label{s2}
Let  $A$ be a $k$-element $S_2$-set in a finite abelian group $G$. Then 
\[|G| \ge k(k-3) - |G[2]| + 1,\]
where $G[2] = \{x\in G \setminus \{0\} : 2x = 0\}$.
\end{theorem}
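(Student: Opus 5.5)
The plan is to count the nonzero differences $a_i-a_j$ ($i\neq j$) of $A$ and show that they take many distinct values. There are $k(k-1)$ ordered differences, and all their values lie in $G\setminus\{0\}$. So it suffices to show that the total \emph{excess}
\[\sum_{g\neq 0}\bigl(r(g)-1\bigr)^+ \le 2k+|G[2]|,\]
where $r(g)=\#\{(i,j): i\ne j,\ a_i-a_j=g\}$. Then the number of distinct nonzero differences is at least $k(k-1)-2k-|G[2]|$, and this is at most $|G|-1$, which gives $|G|\ge k(k-3)-|G[2]|+1$.

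\emph{Step 1: structure of the representations of a fixed $g\neq 0$.} Form the directed graph $D_g$ on $\{1,\dots,k\}$ with an edge $j\to i$ whenever $a_i-a_j=g$. Every vertex has in-degree and out-degree at most $1$, so $D_g$ is a disjoint union of directed paths and cycles. Take two edges $j\to i$ and $m\to l$ with no common vertex. Then $a_i+a_m=a_l+a_j$ is an equality of two sums of distinct elements over different pairs, which contradicts the $S_2$ property. Hence the edges of $D_g$ pairwise share a vertex. Given the degree constraints, $D_g$ is then one of the following:
\begin{itemize}
\item a single edge;
\item a directed path $j\to i\to m$ of length $2$, in which case $2a_i=a_j+a_m$;
\item a $2$-cycle, in which case $2g=0$;
\item a $3$-cycle, in which case each of its three vertices is the middle of a length-$2$ path.
\end{itemize}

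\emph{Step 2: charging the excess.} A $2$-cycle contributes excess $1$, and then $g\in G[2]$. A length-$2$ path cannot occur when $2g=0$, since it would force $a_j=a_m$. So the total excess coming from $g\in G[2]$ is at most $|G[2]|$.

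For $g$ with $2g\neq 0$, I charge the excess to \emph{midpoints}. A midpoint is an index $i$ with $2a_i=a_j+a_m$ for some distinct $j,m\neq i$. By the $S_2$ property, for each $i$ there is at most one unordered pair $\{j,m\}$ with this property. Each such triple produces two length-$2$ paths: $j\to i\to m$ in $D_g$ and $m\to i\to j$ in $D_{-g}$, where $g=a_i-a_j$. This gives at most $2$ charges per index, so at most $2k$ charges in total.

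A path component of $D_g$ has excess $1$ and contains one midpoint, which pays for it. A $3$-cycle has excess $2$ but contains three midpoints, each paying $1$ for this $g$, so it is also covered. Hence the excess from all $g$ with $2g\ne0$ is at most $2k$.

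\emph{Step 3: assembly.} Combining Steps 1 and 2 gives the bound on the total excess, and the counting described at the start completes the proof.

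The main obstacle is Step 1: the case analysis showing that pairwise-intersecting edges in a union of paths and cycles yield at most a $2$-path, a $2$-cycle or a $3$-cycle. The $3$-cycle case needs the most care. It is exactly where a naive "at most $2$ representations" claim fails, and the midpoint charging in Step 2 is what absorbs it.
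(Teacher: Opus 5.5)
Your proof is correct and follows essentially the same route as the paper. Both arguments count the $k(k-1)$ ordered nonzero differences and use the $S_2$ property to show that any two representations of the same $g$ share an element. Both then bound the excess by $2k$, using triples with $2a_i=a_j+a_m$ (the pair is unique for each $i$), plus $|G[2]|$, using pairs $\{(a,b),(b,a)\}$. The one difference is that the paper pairs each extra representation of $g$ with a fixed first representation and injects these pairs directly into such triples or $2$-torsion differences, which handles your $3$-cycle case automatically without classifying $D_g$.
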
		
				
In this paper, we establish a lower bound for $v_3(k)$. More precisely, we prove the following theorem.
		
\begin{theorem}\label{s3}
 Let $k$ be an integer with $k \geq 5$. Let  $A$ be a $k$-element $S_3$-set in a finite abelian group $G$. Then 
\[|G|  \geq \frac{k(k-1)(k-2)}{2} - \frac{7k(k - 1)}{2} - 2k - |G[2]|(k - 2),\]
where $G[2] = \{x\in G \setminus \{0\} : 2x = 0\}$.
\end{theorem}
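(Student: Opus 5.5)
The plan is a counting argument of the type used for Theorem~\ref{s2}: attach to $A$ a family of about $\frac{k(k-1)(k-2)}{2}$ group elements, show that the $S_3$ property makes them almost all distinct, and conclude that $|G|$ is at least the number of distinct values. Write $A=\{a_1,\dots,a_k\}$ and set
\[
\mathcal{T}=\bigl\{(\{i,j\},l) : i\neq j,\ l\notin\{i,j\}\bigr\},\qquad |\mathcal{T}|=\binom{k}{2}(k-2)=\frac{k(k-1)(k-2)}{2},
\]
with the map $\varphi(\{i,j\},l)=a_i+a_j-a_l\in G$. Then $|G|\ge|\varphi(\mathcal{T})|\ge|\mathcal{T}|-C$, where $C$ is the number of ``excess'' coincidences, i.e.\ $\sum_{g}(|\varphi^{-1}(g)|-1)$. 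The goal is to show
\[
C\le \frac{7k(k-1)}{2}+2k+|G[2]|(k-2).
\]

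First observe that for $k\ge5$ an $S_3$-set is also an $S_2$-set. If $a+b=c+d$ with $\{a,b\}\neq\{c,d\}$, pick $e\in A\setminus\{a,b,c,d\}$ (possible since $k\ge5$) and add $e$ to both sides, which contradicts the $S_3$ property. Also, all elements of $A$ are distinct.

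Now suppose $\varphi(\{i,j\},l)=\varphi(\{p,q\},r)$ for distinct elements of $\mathcal{T}$. This is equivalent to
\[
a_i+a_j+a_r=a_p+a_q+a_l.
\]
\emph{Case 1: $r\notin\{i,j\}$ and $l\notin\{p,q\}$.} Both sides are sums of three distinct elements, so the $S_3$ property gives $\{i,j,r\}=\{p,q,l\}$ as sets. If $r=l$ then $\{i,j\}=\{p,q\}$, which is the same element of $\mathcal{T}$. So $r\in\{i,j\}$, which contradicts the case assumption. Hence Case 1 produces no collisions.

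\emph{Case 2: $r\in\{i,j\}$ (or symmetrically $l\in\{p,q\}$).} Say $r=i$. The relation becomes $2a_i+a_j=a_p+a_q+a_l$, with $\{p,q,l\}$ three distinct indices. I will count these degenerate relations directly, splitting into sub-cases according to whether $i$ or $j$ lies in $\{p,q,l\}$.
\begin{itemize}
\item If $i\in\{p,q,l\}$, cancelling $a_i$ gives a two-term relation $a_i+a_j=a_s+a_t$. By the $S_2$ property this forces $\{i,j\}=\{s,t\}$, unless $i$ or $j$ is repeated, which leads to a relation $2a_i=a_s+a_t$ or $a_j=a_t$.
\item If $j\in\{p,q,l\}$, we get $2a_i=a_s+a_t$.
\end{itemize}
For a fixed $i$, the relation $2a_i=a_s+a_t$ can hold for at most one unordered pair $\{s,t\}$, by the $S_2$ property. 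This yields $O(k)$ degenerate configurations per index, and hence $O(k^2)$ in total. I expect this bookkeeping to produce the $\frac{7k(k-1)}{2}+2k$ term.

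\emph{Case 3: both $r\in\{i,j\}$ and $l\in\{p,q\}$ and, in addition, $i,j,r$ involves a doubled element on each side.} Here the relations take the form $2a_i+a_j=2a_p+a_q$, that is, $2(a_i-a_p)=a_q-a_j$. This is where $G[2]$ enters, since $2x=2y$ does not imply $x=y$. I would handle it by fixing the difference. Each nonzero $x\in G[2]$ together with each choice of third index contributes at most $O(1)$ extra coincidences. This gives the term $|G[2]|(k-2)$, which parallels the $-|G[2]|$ term in Theorem~\ref{s2}.

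The main obstacle is exactly this last accounting. The cases $2a_i+a_j=a_p+a_q+a_l$ and $2a_i+a_j=2a_p+a_q$ are not controlled by the $S_3$ condition itself. I must show that each fiber $\varphi^{-1}(g)$ is small and that the excess summed over all fibers meets the stated constants. My first attempt would be to fix the ``doubled'' index $i$ and count, for each $i$, the solutions using the $S_2$ property together with the fact that $2a_i=2a_p$ forces $a_i-a_p\in G[2]$. If the constants come out slightly worse, I would refine by noting that each coincidence is counted from both sides, so one may divide by $2$.
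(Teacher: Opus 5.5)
Your set-up is the same as the paper's. Your $\mathcal{T}$, $\varphi$ and $C$ are its $X_3$, $Y^3_g$ and $|\mathscr{Y}_3|$. Your Case~1 being empty is its claim that $\mathscr{Y}_{31},\mathscr{Y}_{32}$ partition $\mathscr{Y}_3$, and your Cases~2 and~3 are $\mathscr{Y}_{31}$ and $\mathscr{Y}_{32}$. But the proposal stops exactly where the estimates begin, and it misjudges both of them.

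In Case~2 (say $r=i$), your sub-case $i\in\{p,q,l\}$ cannot occur, since $r\notin\{p,q\}$ and $l\notin\{i,j\}$ by the definition of $\mathcal{T}$. You then bound only the degenerate relation $2a_i=a_s+a_t$. The generic relation $2a_i+a_j=a_p+a_q+a_l$ is left open, on the grounds that it is ``not controlled by the $S_3$ condition''. It is controlled. The right side is a sum of three distinct elements, so for each of the $k(k-1)$ ordered pairs $(i,j)$ the set $\{p,q,l\}$ is unique, and there are at most three choices of $l$. This bounds all Case~2 collisions, degenerate ones included, by $3k(k-1)$; this is the paper's bound $|T_{31}|\le 3k(k-1)$.

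The more serious gap is in Case~3. You treat only $2(a_i-a_p)=a_q-a_j=0$. That part does give $|G[2]|(k-2)$: by $S_2$ an element of $G[2]$ equals $b-a$ for at most one pair $a\prec b$, and $a_j=a_q$ has $k-2$ choices. The case $2(a_i-a_p)=a_q-a_j\neq 0$ is never addressed, and $G[2]$ says nothing about it. This is where the rest of the constant comes from: $\frac{7k(k-1)}{2}+2k=3k(k-1)+\binom{k}{2}+2k$. So the extra $\binom{k}{2}+2k$ comes from this case, not from Case~2 bookkeeping as you expected.

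The missing idea is to reduce to difference representations. Fix $a\prec b$ with $g=2(b-a)\neq 0$, and count pairs $(x,y)$ of elements of $A$ with $x-y=g$. By $S_2$, any two distinct such pairs $(x,y),(x',y')$ overlap head-to-tail ($x=y'$ or $x'=y$), so there are at most three of them. This crude count already closes the argument, but with $\frac{9k(k-1)}{2}$ in place of $\frac{7k(k-1)}{2}+2k$.

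The paper sharpens it by allowing one representation per pair $a\prec b$ and charging the excess to the $S_2$ collision count $|\mathscr{Y}_2|\le 2k+|(A\ominus A)[2]|$ from the proof of Theorem~\ref{s2}. This gives $\binom{k}{2}-|(A\ominus A)[2]|+2k+|(A\ominus A)[2]|$. If you follow that route, note that the charging tacitly treats distinct pairs $a\prec b$ as giving distinct values $2(b-a)$. This can fail when $G$ has $2$-torsion, so that step needs its own argument. Without some such bound for the nonzero two-sided case, your proposal does not establish the stated inequality.
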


We prove Theorem \ref{s3} in Section \ref{ps3}. Theorem \ref{s2} was established in \cite{haanpaa2007}. We present another proof of Theorem \ref{s2} in Section \ref{ps2}, that relies on different combinatorial arguments.		
	
\section{Notation} \label{sec-notation}

Let $A=\{a_1,\ldots,a_k\}$ be an $S_h$-set in $G$, and fix an ordering
\[
a_1\prec a_2\prec\cdots\prec a_k
\]
of its elements. Thus, for any $x,y\in A$, where $x=a_i$ and $y=a_j$, we have
\[
x\prec y \quad\text{if and only if}\quad i<j.
\]
 We define 
\[A_i = A \setminus \{a_i\} ~ \text{for $i = 1$ to $k$}\] 
and
\[A \ominus A = \{b - a : a, b \in A ~ \text{and}~ a \prec b\}\]
For a subset $S$ of $G$, we define 
\[S[2] = \{x \in S \setminus \{0\} : 2x = 0\}\]
and
\[2\cdot S = \{2x : x \in S\}.\]
Note that, an $S_h$-set is also an $S_r$-set for every positive integer $r$ with $r \leq h$ if the set has at least $2h - 1$ elements.
Throughout the paper, the symbols $a$, $b$, $c$, $d$, $x_i$, and $y_i$ denote elements of $A$.

\section{Proof of Theorem \ref{s2}}\label{ps2}
				
\begin{proof}[Proof of Theorem \ref{s2}] 
 Let
\[X_{2i} = A_i \times \{a_i\},\]
\[X_2 = \bigcup_{i = 1}^{k} X_{2i}\]
and for $g \in G$, we define
\[Y^2_g = \{(x_1, a_i) \in X_2 : x_1 - a_i = g\}.\] 
Note that, if $(x_1, a_i)$ and  $(y_1, a_j)$ are two distinct elements in  $Y^2_g$ then $i \neq j$. The smallest $i$ such that $(x_1, a_i) \in Y^2_g$ is denoted by $i_g$. 
Consider the following collection of sets: 
\[\mathscr{Y}_2 = \left\{\left\{(x_1, a_{i_g}), (y_1, a_j)\right\} \subseteq Y^2_g : g \in G\right\}.\]
We next consider two subsets $\mathscr{Y}_{21}$ and $\mathscr{Y}_{22}$ of $\mathscr{Y}_2$, defined as follows: 
\[\mathscr{Y}_{21} = \left\{\{(x_1, a_i), (y_1, a_j)\} \in \mathscr{Y}_2 : ~ \text{either}~ a_j = x_1, ~ a_i \neq y_1, ~\text{or} ~ a_j \neq  x_1, ~ a_i = y_1\right\},\]
and 
\[\mathscr{Y}_{22} = \left\{\{(x_1, a_i), (y_1, a_j)\} \in \mathscr{Y}_2 : a_j = x_1 ~ \text{and} ~ a_i = y_1\right\}.\]

Since $A$ is an  $S_2$-set, $\mathscr{Y}_{21}$ and $\mathscr{Y}_{22}$ partition $\mathscr{Y}_2$.

Clearly, $|Y_0^2| = 0$. There are at most $|G| - 1$ possible values of $g$ for which $|Y^2_g| \geq 1$. Therefore, total number of elements of the form $(x_1, a_{i_g}) \in X_2$ are at most $|G|-1$. The remaining elements of $X_2$ correspond to distinct elements of  $\mathscr{Y}_2$. So we have
\[|G| - 1 + |\mathscr{Y}_2| \geq |X_2| = k(k-1).\]
That is, 
\begin{equation}\label{Y2}
|G| - 1 + |\mathscr{Y}_{21}| + |\mathscr{Y}_{21}| \geq k(k-1).
\end{equation}
We first estimate a bound for $|\mathscr{Y}_{21}|$ and then for $|\mathscr{Y}_{22}|$.

Consider the set $T_{21}$ defined as:
\[T _{21} = \{(a, b, c) : 2a = b + c ~\text{and}~ a, b, c ~\text{all distinct}\}.\] 
We define a map $f_{21}:\mathscr{Y}_{21} \to T_{21}$ by
\[f\big(\{(a, b),(c, a)\}\big)=(a, b, c),\]
where $\{(a, b),(c, a)\}=\{(c, a),(a, b)\}$ both representations are assigned the same image $(a, b, c)$. Also, it is straightforward to verify that this map is a bijection.
			
As $A$ is an $S_2$-set, we have \[|T_{21}| \leq 2k.\]
That is,  
\begin{equation}\label{Y21}
|\mathscr{Y}_{21}| \leq 2k.
\end{equation}
Let $\{(a, b), (c, d)\}$ be a set in $\mathscr{Y}_{22}$. Since $a = d, b = c$, we have $a - b = b - a$, and $2(a - b) = 2(b - a) = 0$. 
Consider the set $T_{22}$ defined as:
\[T_{22} = \{(a, b) : 2(a - b) = 0~ \text{and} ~ b \prec a\}.\] 
We define a map $f_{22}:\mathscr{Y}_{22} \to T_{22}$ by
\[
f_2\big(\{(a, b),(b, a)\}\big)=
\begin{cases}
(a, b), & \text{if } b \prec a,\\
(b, a), & \text{if } a \prec b.
\end{cases}
\]
where $\{(a, b),(b, a)\}=\{(a, b),(b, a)\}$ both representations are assigned the same image $(a, b)$ or $(b, a)$. Also, it is straightforward to verify that this map also is a bijection.
			
Observe that, if $(a, b), (c, d) \in T_{22}$ such that $a - b = c- d$, then $(a, b) = (c, d)$. This implies that \[|T_{22}| = |(A \ominus A)[2]| \leq |G[2]|.\]
That is, 
\begin{equation}\label{Y22}
|\mathscr{Y}_{22}| \leq |G[2]|.
\end{equation}
Using \eqref{Y2}, \eqref{Y21} and  \eqref{Y22}, we get
\[|G| - 1 + 2k + |(A \ominus A)[2]| \geq k(k - 1).\]
This implies that	
\[|G| \ge k(k - 3) - |(A \ominus A)[2]| + 1.\]
Hence the Theorem \ref{s2} follows.
\end{proof}
		
\section{Proof of Theorem \ref{s3}}\label{ps3}

\begin{proof}[Proof of Theorem \ref{s3}] 
Let
\[X_{3i} = \big(2^\wedge A_i\big) \times \{a_i\},\]
\[X_3 = \bigcup_{i = 1}^{k} X_{3i}\]
and for $g \in G$, we define
\[Y^3_g = \{(x_1 + x_{2}, a_i) \in X_3 : x_1 + x_{2} - a_i = g\}.\] 
Note that, if $(x_1 + x_{2}, a_i)$ and  $(y_1 + y_{2}, a_j)$ are two distinct elements in  $Y^3_g$ then $i \neq j$. The smallest $i$ such that $(x_1 + x_{2}, a_i) \in Y^3_g$ is denoted by $i_g$. 
Consider the following collection of sets: 
\[\mathscr{Y}_3 = \left\{\left\{(x_1 + x_{2}, a_{i_g}), (y_1 + y_{2}, a_j)\right\} \subseteq Y^3_g : g \in G\right\}.\]

We next consider two subsets $\mathscr{Y}_{31}$ and $\mathscr{Y}_{32}$ of $\mathscr{Y}_3$, defined as follows: 
\begin{align*}
\mathscr{Y}_{31} = \{\{(x_1 + x_{2}, a_i), (y_1 + y_{2}, a_j)\} \in \mathscr{Y}_3 : & ~\text{either}~ a_j \in \{x_1, x_{2}\}, ~ a_i \notin \{y_1, y_{2}\} \\ 
& ~\text{or} ~ a_j \notin\{x_1, x_{2}\}, ~ a_i \in \{y_1, y_{2}\}\},
\end{align*} 

and 
\[\mathscr{Y}_{32} = \left\{\{(x_1 + x_{2}, a_i), (y_1 + y_{2}, a_j)\} \in \mathscr{Y}_3 : a_j \in \{x_1, x_{2}\} ~\text{and}~ a_i \in \{y_1, y_{2}\}\right\}.\]

Since $A$ is an  $S_3$-set, $\mathscr{Y}_{31}$ and $\mathscr{Y}_{32}$ partition $\mathscr{Y}_3$.

Obviously, total number of elements of the form $(x_1 + x_{2}, a_{i_g}) \in X_3$ are at most $|G|$. The remaining elements of $X_3$ correspond to distinct elements of $\mathscr{Y}_3$. Thus we get
\[|G| + |\mathscr{Y}_3| \geq |X_3| = \frac{k(k-1)(k-2)}{2}.\]
That is, 
\begin{equation}\label{Y3}
|G| + |\mathscr{Y}_{31}| + |\mathscr{Y}_{32}| \geq \frac{k(k - 1)(k - 2)}{2}.
\end{equation}
We next separately estimate $|\mathscr{Y}_{31}|$ and $|\mathscr{Y}_{32}|$.

Observe that, every set in $\mathscr{Y}_{31}$ admits a unique representation of the form
\[\{(x_1 + a, b),(y_1 + y_2, a)\},\]
where $a, b$ and $x_1$ are pairwise distinct, and $a, b, y_1$ and $y_2$ are pairwise distinct with $y_1 \prec y_2$. We call this the \emph{canonical representation} of this set.
	
Similarly, every set in $\mathscr{Y}_{32}$ admits a unique representation of the form
\[\{(x_1 + a, b),(y_1 + b, a)\},\]	where  $a, b$ and $x_1$ are pairwise distinct, and $a, b$ and $y_1$ are pairwise distinct with $a \prec b$. We call this the \emph{canonical representation} of the set.
	
Consider the set $T_{31}$ defined as:
\begin{align*}
T_{31} = \{(a, b, x_1, y_1, y_2) : & ~ 2a + x_1 = b + y_1 + y_2, ~ \text{with} ~ a, b, x_1  ~\text{are pairwise distinct},\\ &  a, b, y_1, y_2 ~\text{are pairwise distinct and} ~ y_1 \prec y_2\}.
\end{align*}
We define a map $f_{31}:\mathscr{Y}_{31} \to T_{31}$ by
\[f_1\big(\{(x_1 + a,b), (y_1 + y_2,a)\}\big) = (a, b, x_1, y_1, y_2),\]
where $\{(x_1 + a,b), (y_1 + y_2,a)\}$ denotes the canonical representation of the set. Since every set admits a unique canonical representation, the map is well defined. Also, it is straightforward to verify that this map is a bijection.
	
A simple combinatorial argument yields the bound $|T_{31}| \leq 3k(k -1)$.
So we have 
\begin{equation}\label{T31}
|\mathscr{Y}_{31}| \leq 3k(k -1)
\end{equation}
	
Let $\{(x_1 + a, b),(y_1 + b, a)\}$ be a set in $\mathscr{Y}_{31}$. Consider the following set: 
\begin{align*}
T_{32} = \{(a, b, x_1, y_1) : ~ & 2(b - a) = x_1 - y_1,~ \text{where} ~ a, b, x_1  ~\text{are pairwise distinct, and}\\ &  a, b, y_1 ~\text{are pairwise distinct with} ~ a \prec b\}.
\end{align*}
Let 
\begin{align*}
T_{32}' = \{(a, b, x_1, y_1) : & ~ 2(b - a) = x_1 - y_1 = 0,~ \text{where} ~ a, b, x_1   ~\text{are pairwise distinct, and} \\ & a, b, y_1 ~\text{are pairwise distinct with} ~ a \prec b\}.
\end{align*}
Then 
\begin{align*}
T_{32} \setminus T_{32}'  = &\{(a, b, x_1, y_1) : ~  2(b - a) = x_1 - y_1 \neq 0,~ \text{where}~ a, b, x_1  ~\text{are pairwise distinct},\\ & \text{and}  ~ a, b, y_1 ~\text{are pairwise distinct with} ~ a \prec b\}.
\end{align*}
Observe that, if $a ,b, c, d \in A$ such that (i) $a \prec b$ and $c \prec d$ (ii) $b - a = d - c$, and (iii) $2(b - a) = 0$, then $a = c$ and $b = d$. This implies that
\begin{equation}\label{bT32p}
|T_{32}'| \leq |(A \ominus A)[2]|(k - 2). 
\end{equation}
To get an upper bound for $|T_{32} \setminus T_{32}'|$. We define 
\[\mathscr{Y}_3' = \left\{\left\{(a, a_{i_g}), (c, a_j)\right\} \subseteq Y^2_g : g \in 2\cdot(A \ominus A) ~ \text{and} ~ g \neq 0\right\},\]
where $Y^2_g$ and $i_g$ are as defined in the proof of Theorem \ref{s2}.
	
A simple counting yields, 
\begin{equation}\label{bT32c}
|T_{32} \setminus T_{32}'| \leq \frac{k(k-1)}{2} - |(A \ominus A)[2]| + |\mathscr{Y}_3'|
\end{equation}
It is proved in the proof of the Theorem \ref{s2} that 
\[|\mathscr{Y}_{2}| \leq 2k + |(A \ominus A)[2]|.\] Obviously, $\mathscr{Y}_3' \subseteq \mathscr{Y}_{2}$. Therefore,
\begin{equation}\label{by3p}
|\mathscr{Y}_3'| \leq 2k + |(A \ominus A)[2]|.
\end{equation}
Using \eqref{bT32c} and \eqref{by3p}, we get 
\begin{equation}\label{bT32pc}
|T_{32} \setminus T_{32}'| \leq \frac{k(k-1)}{2} + 2k.
\end{equation}
Clearly, 
\[|T_{32}| = |T_{32}'| + |T_{32} \setminus T_{32}'|\]
Using \eqref{bT32p} and \eqref{bT32pc}, we obtain 
\begin{equation}\label{bT32}
|T_{32}| \leq \frac{k(k-1)}{2} + 2k + |(A \ominus A)[2]|(k - 2).
\end{equation}
We define a map $f_{32}:\mathscr{Y}_{32} \to T_{32}$ by
\[f_{32}\big(\{(x_1 + a, b),(y_1 + b, a)\}\big) = (a, b, x_1, y_1),\]	where $\{(x_1 + a, b),(y_1 + b, a)\}$ denotes the canonical representation of the set. Since every set admits a unique canonical representation, the map is well defined. Also, it is straightforward to verify that this map is a bijection. Therefore, by \eqref{bT32}, we obtain
\begin{equation}\label{T32}
|\mathscr{Y}_{32}| \leq \frac{k(k-1)}{2} + 2k + |(A \ominus A)[2]|(k - 2).
\end{equation}
Using  \eqref{T31} and \eqref{T32}, we get 
\begin{equation*}
|\mathscr{Y}_3| \leq 3k(k-1) + \frac{k(k-1)}{2} + 2k + |(A \ominus A)[2]|(k - 2).
\end{equation*}
Hence it follows from \eqref{Y3} that
\[|G| + \frac{7k(k-1)}{2} + 2k + |(A \ominus A)[2]|(k - 3) \geq \frac{k(k-1)(k-2)}{2}.\]
This implies that 
\[|G|  \geq \frac{k(k-1)(k-2)}{2} - \frac{7k(k - 1)}{2} - 2k - |(A \ominus A)[2]|(k - 2).\]
Hence Theorem \ref{s3} follows.	

\end{proof}
		
% ------------------------------------------------------------------------
		
\section*{Acknowledgment}
The research of the first named author is supported by the UGC Fellowship (NTA Ref. No.: 231610040283).

% ------------------------------------------------------------------------

\begin{thebibliography}{99}
		
\bibitem{bajnok2018} B. Bajnok, \emph{Additive Combinatorics: A Menu of Research Problems}, CRC Press, 2018.
					
\bibitem{brouwer1990} A. E. Brouwer, J. B. Shearer, N. J. A. Sloane and W. D. Smith, A new table of constant weight codes, {\it IEEE Trans. Inform. Theory} {\bf 36} (1990) 1334--1380.

\bibitem{graham1980a} R. L. Graham and N. J. A. Sloane, Lower bounds for constant weight codes, {\it IEEE Trans. Inform. Theory} {\bf 26} (1980) 37--43.

\bibitem{graham1980b} R. L. Graham and N. J. A. Sloane, On additive bases and harmonious graphs, {\it SIAM J. Algebraic Discrete Methods} {\bf 1} (1980) 382--404.


\bibitem{guy1994} R. K. Guy, \emph{Unsolved Problems in Number Theory}, 2nd ed., Springer, New York, 1994.
			
\bibitem{pantoja2025} V. C. Guerrero~Pantoja, J. H. Castillo and C. A. Trujillo Solarte, $S_h$-sets and linear codes over $\mathbb{F}_q$, {\it arXiv preprint}, arXiv:2411.19413 (2025).

\bibitem{haanpaa2004} H. Haanp\"a\"a, A. Huima and P. \"Osterg{\aa}rd, Sets in $\mathbb{Z}_n$ with distinct sums of pairs, {\it Discrete Appl. Math.} {\bf 138} (2004), 99--106.
					
\bibitem{haanpaa2007} H. Haanp\"a\"a and P. \"Osterg{\aa}rd, Sets in abelian groups with distinct sums of pairs, {\it J. Number Theory} {\bf 123} (2007), 144--153.		
\end{thebibliography}
\end{document}